\newtheorem*{rep@theorem}{\rep@title}
\newcommand{\newreptheorem}[2]{%
\newenvironment{rep#1}[1]{%
 \def\rep@title{#2 \ref{##1}}%
 \begin{rep@theorem}}%
 {\end{rep@theorem}}}
\newtheorem*{theorem*}{Theorem}
\newtheorem{theorem}{Theorem}[section]
\newtheorem{proposition}[theorem]{Proposition}
\newtheorem{lemma}[theorem]{Lemma}
\theoremstyle{definition}
\newtheorem{remark}[theorem]{Remark}
\newcommand{\NN}{\mathbb{N}}
\newcommand{\ZZ}{\mathbb{Z}}
\newcommand{\PP}{\mathbb{P}}
\newcommand{\oo}{\mathcal{O}}
\renewcommand{\to}{\longrightarrow}
\newcommand{\commentpigna}[1]{}
\DeclareMathOperator{\Sing}{Sing}
\title{New surfaces with canonical map of high degree}
\author{Christian Gleissner, Roberto Pignatelli, Carlos Rito}
\date{}
\begin{document}
\maketitle
%


\begin{abstract}
We give an algorithm that, for a given value of the geometric genus $p_g,$
computes all regular product-quotient surfaces with abelian group that have at most canonical singularities
and have canonical system with at most isolated base points. 
We use it to show that there are exactly two families of such surfaces with
canonical map of degree $32$.  We also construct a surface with $q=1$ and canonical map of degree $24$. 
These are regular surfaces with $p_g=3$ and base point free canonical system.
We discuss the case of regular surfaces with $p_g=4$ and base point free canonical system.
\end{abstract}


\Footnotetext{{}}{2010 \textit{Mathematics Subject Classification}:
Primary: 14J29 Secondary: 14J10}

\Footnotetext{{}} {\textit{Keywords}: Surface of general type; Product-quotient surface; Canonical map}




\section{Introduction} \label{sec:intro}

Let $S$ be a smooth surface of general type with irregularity $q$ and geometric genus $p_g\geq 3$.
Denote by $\phi$ the canonical map of $S$ and let $d:=\deg(\phi).$
It is known since Beauville \cite{Be} that if the canonical image $\phi(S)$ is a surface, then
$$d\leq 36-9q \ \ {\rm if} \ \ q\leq 3, \ \ \ d\leq 8 \ \ {\rm if} \ \ q\geq 4.$$
Beauville has also constructed families of examples with $\chi(\mathcal O_S)$ arbitrarily
large for $d=2, 4, 6, 8.$ Despite being a classical problem, for $d>8$ the
number of known examples is scarce. Tan's example \cite[\S 5]{Ta} with
$d=9, q=0$ and Persson's example \cite{Pe} with $d=16,$ $q=0$ are well known.
Du and Gao \cite{DG} show that if the canonical map is an abelian cover
of $\mathbb P^2,$ then these are the only possibilities for $d>8.$
More recently the third author has given examples with $d=16,$ $q=2$ \cite{Ri1} and $d=24,$ $q=0$ \cite{Ri2}.
There is a paper \cite{Ye} claiming the existence of the case $d=36,$ but, to our knowledge, the proof is not correct.

In this paper we consider the problem of finding  product-quotient surfaces $(A\times B)/G$ with at most canonical singularities
having canonical map of maximum degree. For these surfaces $K^2\leq 8\chi$ (see \cite{4names}),
equality holding if and only if the {\it quotient model} $(A\times B)/G$ is smooth, ${\it i.e.}$ the action of $G$ is free.
Here Beauville's argument gives 
 $$d\leq 32-8q \ \ {\rm if} \ \ q\leq 3,$$
equality holding if and only if $G$ acts freely, $p_g=3$ and the canonical system is base point free.
In order to be able to understand this system, we restrict our study to abelian groups $G.$
Such surfaces are then abelian coverings of the product $(A/G)\times(B/G),$
and we can use Pardini's \cite{Pa} formulas to understand their canonical curves.

We give an algorithm that, for a given value of the geometric genus $p_g$ and some $n\in\mathbb N,$
computes all regular product-quotient surfaces with abelian group
$G$ that have at most canonical singularities and have canonical system with at most $n$ base points.
Applying it to the case $K^2=32,$ we get exactly two families of surfaces with $p_g=3,$ $q=0$ and canonical map of degree $K^2=32$ onto $\mathbb P^2.$
We describe these surfaces as $(\mathbb Z/2)^4$-coverings of $\mathbb P^1\times\mathbb P^1$ in Section \ref{deg32}.

We have also found a family of product-quotient surfaces with $p_g=3,$ $q=1$ and canonical map of degree $K^2=24$ onto $\mathbb P^2.$
We give the construction as a $(\mathbb Z/2)^3$-covering of $E\times\mathbb P^1$ in Section \ref{deg24}, where $E$ is an elliptic curve.
One can show that this is the unique such family with group $G=(\mathbb Z/2)^3,$ we give the idea for the proof of this fact in Remark \ref{rem}.

For product-quotient surfaces with $p_g\geq 4$ and $q\leq 3,$
Beauville's proof gives the inequality $$d\leq 8 \left( 1 + \frac{3-q}{p_g-2}\right)\leq 20.$$
Strangely enough the output of our algorithm for $p_g=4$ does not contain any quotient
$(A \times B)/G$ with $G$ acting freely,
and therefore there exists no product-quotient surface $(A \times B)/G$ with $G$ abelian and canonical map of degree  $20$.
We show that the maximum degree for regular such surfaces is $12.$
The value $p_g=4$ is a surprising gap. Indeed Catanese constructed in \cite{Ca2} regular product-quotient surfaces with $p_g=5$ and $6$ of the form $(A \times B)/G$ with $G$ abelian acting freely and canonical system without base points. Catanese's examples, having canonical map of degree $1$ and $\phi(S)$ of very high degree,  have been an important source of inspiration for this paper.

To keep the paper as simple as possible, for the convenience of the readers, we describe our examples directly as abelian covers of $(A/G) \times (B/G)$ instead of as quotients $(A \times B)/G$. We refer the interested reader to \cite{BP12,BP16} and the references therein for the theory of product-quotient surfaces in the general case of arbitrary singularities.

An implementation of our algorithm as MAGMA script may be downloaded at\newline
\url{http://www.science.unitn.it/~pignatel/papers/CanonicalMapProg.magma}

\subsubsection*{Acknowledgments}
C. Gleissner wants to thank his mentor I. Bauer for the motivation to write an algorithm to classify 
regular product-quotient surfaces with abelian group and analyse their canonical system. 
He was partially supported by FIRB 2012 "Moduli spaces and Applications". 

R. Pignatelli is grateful to F. Catanese for inviting him to Bayreuth with the ERC-2013-Advanced Grant-340258-TADMICAMT; part of this research took place during his visit.
He  is  a  member  of  GNSAGA-INdAM and  was partially  supported  by  the  project  PRIN 2015 Geometria delle 
variet\`a algebriche.

C. Rito was supported by FCT (Portugal) under the project PTDC/MAT-GEO/2823/ 2014,
the fellowship SFRH/BPD/111131/2015 and by CMUP (UID/MAT/00144/2013),
which is funded by FCT with national (MEC) and European structural funds through the programs FEDER,
under the partnership agreement PT2020.


\section{$(\mathbb Z/2)^r$-coverings and canonical systems}

The following result is taken from \cite[Proposition 6.6]{Ca} (see also \cite{Pa}). 
\begin{proposition}\label{CatProp}
A normal finite $G\cong(\mathbb Z/2)^r$-covering $Y\rightarrow X$ of a smooth variety $X$ is completely
determined by the datum of
\begin{enumerate}
\item reduced effective divisors $D_{\sigma},$ $\forall\sigma\in G,$ with no common components;
\item divisor linear equivalence classes $L_1,\ldots,L_r,$ for $\chi_1,\ldots,\chi_r$ a basis of the dual group of characters
$G^{\vee},$ such that $$2L_i\equiv\sum_{\chi_i(\sigma)=1}D_{\sigma}.$$ 
\end{enumerate}
Conversely, given (1) and (2), one obtains a normal scheme $Y$ with a finite
$G\cong(\mathbb Z/2)^r$-covering $Y\rightarrow X,$ with branch curves the divisors $D_{\sigma}.$
\end{proposition}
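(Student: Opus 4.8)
The plan is to run the standard eigensheaf analysis for abelian covers, specialized to the case in which every nontrivial element of $G$ is an involution. The statement has two halves — that the cover determines the data, and that the data reconstructs the cover — and I would treat them in that order.

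First I would decompose the pushforward. Since $|G|=2^r$ is invertible, the $G$-action on $\pi_*\mathcal{O}_Y$ splits it into character eigensheaves, $\pi_*\mathcal{O}_Y=\bigoplus_{\chi\in G^\vee}\mathcal{O}_X(-L_\chi)$, with the trivial character contributing $\mathcal{O}_X$ because $X=Y/G$. As $X$ is smooth and $Y$ is normal, the finite pushforward $\pi_*\mathcal{O}_Y$ is reflexive, so each rank-one eigensheaf is a line bundle; this defines the classes $L_i:=L_{\chi_i}$. The algebra structure on $\pi_*\mathcal{O}_Y$ restricts to multiplication maps $\mathcal{O}_X(-L_\chi)\otimes\mathcal{O}_X(-L_{\chi'})\to\mathcal{O}_X(-L_{\chi\chi'})$, equivalently sections of $\mathcal{O}_X(L_\chi+L_{\chi'}-L_{\chi\chi'})$, and it is the divisors of these sections that produce the branch data.

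Next I would extract the divisors $D_\sigma$ and prove the relations. Over a prime divisor in the branch locus the inertia is cyclic, hence generated by a unique involution $\sigma\in G$ (normality of $Y$ together with Zariski--Nagata purity of the branch locus); I set $D_\sigma$ to be the reduced sum of the branch components whose inertia is $\langle\sigma\rangle$. Since each such component determines its $\sigma$ unambiguously, the $D_\sigma$ are reduced and share no common components, giving (1). For (2) I would pass to the intermediate double cover $Y/\ker\chi_i\to X$: the quotient by $\ker\chi_i\cong(\ZZ/2)^{r-1}$ is a $\ZZ/2$-cover of $X$, classically the spectrum of $\mathcal{O}_X\oplus\mathcal{O}_X(-L_i)$ determined by a single section of $\mathcal{O}_X(2L_i)$ whose zero locus is its reduced branch divisor. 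That branch divisor is exactly the union of the components of $\pi$ over which $\ker\chi_i$ fails to contain the inertia, i.e. those $D_\sigma$ with $\chi_i(\sigma)=1$; comparing the line bundle of the section with its divisor yields $2L_i\equiv\sum_{\chi_i(\sigma)=1}D_\sigma$. To finish the direct implication, writing an arbitrary character as a product of the $\chi_i$ and inducting on the number of factors expresses every $L_\chi$ and every structure constant of $\pi_*\mathcal{O}_Y$ through the $L_i$ and the canonical sections $s_\sigma\in H^0(\mathcal{O}_X(D_\sigma))$ cutting out the $D_\sigma$, proving that $(D_\sigma,L_i)$ reconstructs $\pi$ uniquely.

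For the converse I would run the construction in reverse: define the $L_\chi$ from the $L_i$ by the product formula, set $\mathcal{A}:=\bigoplus_{\chi}\mathcal{O}_X(-L_\chi)$, and define its multiplication using the sections $s_\sigma$; then $Y:=\mathrm{Spec}_X\mathcal{A}$ with its tautological $G$-action is the candidate cover, branched over $\bigcup_\sigma D_\sigma$. I expect the two compatibility checks here to be the main obstacle. First, one must verify that the multiplication is well defined, commutative, and associative: this is precisely where relations (2) enter, since the isomorphisms $\mathcal{O}_X(2L_i)\cong\mathcal{O}_X(\sum_{\chi_i(\sigma)=1}D_\sigma)$ are what make the products of the $s_\sigma$ glue consistently across all characters, and reconciling these global matchings for every pair of characters is the delicate combinatorial core of the argument. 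Second, one must check that $Y$ is normal; by Serre's criterion this reduces to codimension one, where the cover looks locally like $z^2=(\text{local equation of a reduced divisor})$, and it is exactly the reducedness and the no-common-component hypotheses in (1) that guarantee regularity in codimension one, the $S_2$ condition in higher codimension being automatic for the finite pushforward.
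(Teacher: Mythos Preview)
Your sketch is correct and follows the standard eigensheaf analysis for abelian covers (decompose $\pi_*\mathcal{O}_Y$ by characters, read off the branch data from the inertia, derive the relations via the intermediate double covers, and rebuild $Y$ as $\mathrm{Spec}_X$ of the resulting $\mathcal{O}_X$-algebra). There is, however, nothing in the paper to compare it against: the paper does not prove this proposition but simply quotes it from \cite[Proposition~6.6]{Ca} (with a further reference to \cite{Pa}). What you have written is essentially the argument one finds in those sources, so in that sense your approach coincides with the one the paper is implicitly invoking.
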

The covering $\psi \colon Y\rightarrow X$ is embedded in the total space of the direct sum of the line bundles whose
sheaves of sections are the $\mathcal O_X(L_i),$ and is there defined by equations
$$u_{\chi_i}u_{\chi_j}=u_{\chi_i+\chi_j}\prod_{\chi_i(\sigma)=\chi_j(\sigma)=1}x_{\sigma},$$
where $x_{\sigma}$ is a section such that ${\rm div}(x_{\sigma})=D_{\sigma}.$

The scheme $Y$ is irreducible if $\{\sigma|D_{\sigma}>0\}$ generates $G.$

If the branch locus of $\psi$ is simple normal crossing, then the surface $Y$ is smooth and its invariants are
$$\chi(\mathcal O_Y)=2^r\chi(\mathcal O_X)+\frac{1}{2}\sum_{\chi\in G^{{\vee}*}} \left(L_{\chi}^2+K_XL_{\chi}\right),$$
$$p_g(Y)=p_g(X)+\sum_{\chi\in G^{{\vee}*}} h^0(X,\mathcal O_X(K_X+L_{\chi})).$$

For each $\sigma\in G,$ denote by $R_{\sigma}\subset Y$ the reduced divisor supported on $\psi^*(D_{\sigma}).$
We get from \cite[Proposition 4.1, c)]{Pa} and \cite[Proposition 2.1)]{infrig} that, if $X$ is Gorenstein, for any $\chi,$ $$(\psi_*\omega_Y)^{(\chi)} \cong \omega_X( L_{\chi}).$$ 
Combining with the Hurwitz formula, $$K_Y=\psi^*(K_X)+\sum_{\sigma\in G^*}R_{\sigma},$$
we obtain that the canonical linear system of $Y$ is generated by
\begin{equation}\label{cangens}
\psi^*|K_X+L_i| \sum_{\chi_i(\sigma)=0}R_{\sigma},\ \ i\in J,
\end{equation}
where $J:=\{j:|K_X+L_j|\ne\emptyset\}.$

\section{The families with $\deg(\phi)=32$}\label{deg32}

Let $f,g$ be the rational fibrations of $X:=\mathbb P^1\times\mathbb P^1,$
and let $F_1,\ldots,F_6$ be distinct fibres of $f$ and $E_1,\ldots,E_6$ be distinct fibres of $g.$
Denote by $e_1,\ldots,e_4$ the generators of $(\mathbb Z/2)^4.$ We set $e_{i_1\cdots i_r} := e_{i_1}+\cdots +e_{i_r} $.
\subsection{Building data $2\times(1,1,1,1,1,1)$}

Consider the $(\mathbb Z/2)^4$-covering $$\psi:Y\rightarrow X$$ given by
$$D_{e_1}:=F_1,\ D_{e_2}:=F_2,\ D_{e_3}:=F_3,\ D_{e_4}:=F_4, \ D_{e_{13}}:=F_5,\ D_{e_{24}}:=F_6,$$
$$D_{e_{234}}:=E_1, \ D_{e_{134}}:=E_2,\ D_{e_{124}}:=E_3,\ D_{e_{123}}:=E_4,\ D_{e_{14}}:=E_5,\ D_{e_{23}}:=E_6.$$

For $i,j,k,l\in {\mathbb Z/2},$ let $\chi_{ijkl}$ denote the character which takes the value $i,j,k,l$ on $e_1,e_2,e_3,e_4,$ respectively.
There exist divisors $L_{ijkl}$ such that
\begin{equation*}\label{eq1}
2L_{ijkl}\equiv\sum_{\chi_{ijkl}(\sigma)=1}D_{\sigma},
\end{equation*}
thus the covering $\psi$ is well defined.
Since there is no 2-torsion in the Picard group of $X,$ then $\psi$ is uniquely determined.
The surface $Y$ is smooth because the curves $D_{e_1},\ldots,D_{e_{234}}$ are smooth with pairwise transverse intersections only.

We have $$L_{1100}\equiv L_{0011}\equiv L_{1111}\equiv 2F+2E,$$
where $F$ is a fibre of $f$ and $E$ is a fibre of $g.$
For the remaining cases we have $$L_{ijkl}\equiv F+2E\ \ {\rm or}\ \ 2F+E.$$

This implies that 
$$\chi(\mathcal O_Y)=16+\frac{1}{2}\sum \left(L_{ijkl}^2+K_XL_{ijkl}\right)=4$$ and
$$p_g(Y)=0+\sum h^0(X,\mathcal O_X(K_X+L_{ijkl}))=3.$$

We get from (\ref{cangens}) that $K_Y$ is generated by  the following divisors,
respectively associated to the characters $\chi_{0011}, \chi_{1100}$ and $\chi_{1111}$:
$$ \widehat F_1+\widehat F_2+\widehat E_1+\widehat E_2,\ 
\widehat F_3+\widehat F_4+\widehat E_3+\widehat E_4,\ 
\widehat F_5+\widehat F_6+\widehat E_5+\widehat E_6, $$
where $\widehat F_i:=\frac{1}{2}\psi^*(F_i)$ and $\widehat E_i:=\frac{1}{2}\psi^*(E_i).$

The fact $\widehat F_i\widehat E_j = 4$ implies $K_Y^2=32.$

By looking to their images on $X,$ one verifies that the above three divisors have no common intersection.
Thus $|K_Y|$ is base-point free and then $K_Y^2>0$ implies that the canonical map of $Y$ is not composed with a pencil.
Hence its image is $\mathbb P^2,$ a surface of degree 1, therefore the degree formula implies that the canonical map of $Y$
is of degree $K_Y^2=32.$

\subsection{Building data $2\times(2,1,1,1,1)$}

Here we only give the building data of the covering, the verifications are analogous to the ones in the previous section. 

\[
D_{e_{1}}:=F_1,\ D_{e_{134}}:=F_2,\  D_{e_{123}}:=F_3+F_4,\ D_{e_{13}}:=F_5,\ D_{e_{14}}:=F_6,
\]
\[
D_{e_{2}}:=E_1,\ D_{e_{234}}:=E_2,\  D_{e_{124}}:=E_3+E_4,\ D_{e_{23}}:=E_5,\ D_{e_{24}}:=E_6.
\]

As in the previous case, setting  $\widehat F_i:=\frac{1}{2}\psi^*(F_i)$ and $\widehat E_i:=\frac{1}{2}\psi^*(E_i),$ $K_Y$
is generated by the divisors
$\widehat F_1+\widehat F_2+\widehat E_1+\widehat E_2,\ 
\widehat F_3+\widehat F_4+\widehat E_3+\widehat E_4,\ 
\widehat F_5+\widehat F_6+\widehat E_5+\widehat E_6, $
respectively associated to the characters $\chi_{0011}, \chi_{1100}$ and $\chi_{1111}$.

\section{A family with $\deg(\phi)=24,$ $q=1$}\label{deg24}

Let $$X:=E\times F,$$ with $F\cong\mathbb P^1$ and $E$ a smooth elliptic curve.
Let $E_1,\ldots,E_6\subset X$ be distinct elliptic fibres and $F_1,F_2,F_3\subset X$ be distinct rational fibres.
Since the sum of two points in an elliptic curve is divisible by $2$ in the Picard Group, there are fibres $F_{ij}$ such that
$2F_{ij}\equiv F_i+F_j,$ $i,j\in\{1,2,3\}.$

Let $e_1,e_2,e_3$ be the generators of $(\mathbb Z/2)^3$, set $e_{i_1\cdots i_r} := e_{i_1}+\cdots +e_{i_r} $ and consider the divisors
$$D_{e_1}:=E_1+E_2,\ D_{e_2}:=E_3+E_4,\ D_{e_3}:=E_5+E_6,$$
$$D_{e_{23}}:=F_1,\ D_{e_{13}}:=F_2,\ D_{e_{12}}:=F_3,$$
$$L_{100}:=E+F_{23},\ L_{010}:=E+F_{13}, \ L_{001}:=E+F_{12}.$$

For $i,j,k\in \mathbb Z/2,$ let $\chi_{ijk}$ denote the character which takes the value $i,j,k$ on $e_1,e_2,e_3,$ respectively.
The above data satisfies
\begin{equation*}\label{eq1}
2L_{ijk}\equiv\sum_{\chi_{ijk}(\sigma)=1}D_{\sigma},
\end{equation*}
thus from Proposition \ref{CatProp} it defines a $(\mathbb Z/2)^3$-covering $$\psi:Y\to X.$$
Note that there are four different possible choices for each $F_{ij}$: a different choice produces a different $Y$.
The surface $Y$ is smooth because the curves $D_{e_1},\ldots,D_{e_{23}}$ are smooth with pairwise transverse intersections only.

The fact $$L_{\chi+\eta}\equiv L_\chi+L_\eta -\sum_{\chi(\sigma)=\eta(\sigma)=1}D_\sigma$$
implies that
$$L_{111}\equiv 3E+T,$$
$$L_{110}\equiv 2E+F'_{12},\ L_{101}\equiv 2E+F'_{13}, \ L_{011}\equiv 2E+F'_{23},$$
where 
$$T:=F_{12}+F_{13}+F_{23}-F_1-F_2-F_3$$
and $F_{ij}'$ is a fibre linearly equivalent to $F_{ij}+T$. 

We notice that the divisor class $2T$ is trivial. We choose the $F_{ij}$ so that the divisor class $T$ is not trivial, so $T$ is a 2-torsion element of the Picard group.

Since $K_X\equiv -2E,$ we have that 
$$\chi(\mathcal O_Y)=0+\frac{1}{2}\sum \left(L_{ijk}^2+K_XL_{ijk}\right)=3,$$
$$p_g(Y)=0+\sum h^0(X,\mathcal O_X(K_X+L_{ijk}))=3,$$
and then $q(Y)=1.$

We get from (\ref{cangens}) that $K_Y$ is generated by the following divisors:
$$
\widehat E_5+\widehat E_6+\widehat F_3+\widetilde F'_{12},\
\widehat E_3+\widehat E_4+\widehat F_2+\widetilde F'_{13},\ 
\widehat E_1+\widehat E_2+\widehat F_1+\widetilde F'_{23},
$$
corresponding respectively to the characters
$\chi_{110}$,
$\chi_{101}$ and 
$\chi_{011}$, 
where $\widehat E_i:=\frac{1}{2}\psi^*(E_i),$ $\widehat F_i:=\frac{1}{2}\psi^*(F_i)$, $\widetilde F_{ij}:=\psi^*(F_{ij})$ and $\widetilde F'_{ij}:=\psi^*(F'_{ij}).$

The facts $\widehat E_i\widehat F_j=2$ and $\widehat E_i\widetilde F_{ij}=4$ imply $K_Y^2=24.$ 

The fibres $E_i$, $F_j$, $F'_{kl}$ are distinct with the only possible exceptions $F'_{ij}=2F_k,$ $\{i,j,k\}=\{1,2,3\}$.
Then the above three divisors have no common intersection since their images on $X$ have no common intersection. Thus $|K_Y|$ is base-point free and then, arguing as in Section \ref{deg32},
the canonical map of $Y$ is of degree $K_Y^2=24.$

\begin{remark}\label{rem}
We have a proof that these are the only irregular product-quotient surfaces of the form $(A \times B)/(\mathbb Z/2)^3$ with canonical map of degree $24$. We quickly sketch here the main point of the proof.

Such surfaces $S$ are $(\mathbb Z/2)^3-$covers of $E \times F$ ($E$ elliptic, $F$ rational) branched on an union of elliptic fibres $E_i$ and rational fibres $F_j$. Since  the action of $(\mathbb Z/2)^3$ on $A \times B$ is free, each $D_\sigma$ is either of the form $\sum E_i$ or of the form $\sum F_j$.

By (\ref{cangens}) the canonical system is generated by three divisors corresponding to three characters. If these characters are linearly independent we can assume w.l.o.g. that they are $\chi_{100}$, $\chi_{010}$ and $\chi_{001}$. Then $h^0(E\times F, K_{E\times F}+L_{100})=1$. It is easy to prove that the class of $K_{E\times F}+L_{100}$ can't be trivial, so it is the class of a rational fibre $F_1$, and analogous statement holds for $K_{E\times F}+L_{010}$ and $K_{E\times F}+L_{001}$. Then all the three divisors contain the pull-back of a rational fibre $F_i$. Then $\forall i \in \{1,2,3\}$, $D_{e_i}$ cannot contain  any elliptic fibre $E_j$ or there would be a base point of $K_S$ on $\widehat{E}_j$.
By Hurwitz formula one deduces $D_{e_{ij}} \equiv E$, $D_{e_{123}} \equiv 2E$. Since there is at least a rational fibre $F_0$ in the branch locus, w.l.o.g. $F_0 \leq D_{e_1}$ and one finds a base point of $K_S$ on $F_0 \cap D_{e_{23}}$, a contradiction.

So, the three characters are linearly dependent. The rest of the proof uses similar arguments.
\end{remark}

\section{The algorithm}
In this section we describe our algorithm, producing all regular product-quotient surfaces whose quotient model $Y:=(A \times B)/G$ has $G$ abelian,
at most  rational double points as singularities, and canonical system with at most isolated base points. 

By \cite[Remark 2.5]{4names} every singular point $y \in Y$ is then of type $A_{n_y}$, $n_y \in \NN$.

\begin{lemma}\label{newbound}
Let $Y:=(A \times B)/G$ be the quotient model of a product-quotient surface with only canonical singularities such that $G$ is abelian.
Set $g(A), g(B)$ for the genus of the curve $A, B,$ respectively, and assume w.l.o.g. $g(A) \geq g(B)$. Set also $\chi:=\chi(\oo(Y))$. Then
\[
g(B) \leq 1+2\chi+2\sqrt{\chi^2+2\chi}\ ,\ \ \ \ \ \ \ \ \ \ \ \ 
g(A) \leq 4\chi \frac{g(B)+1}{g(B)-1}+1\ .
\]
\end{lemma}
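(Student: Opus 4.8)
The plan is to derive both inequalities from two ingredients: the relation between $K^2$ and $\chi$ of a product-quotient surface, and the sharp bound on the order of an abelian group of automorphisms of a curve. For the first ingredient I would use that, since $Y$ has only canonical singularities, its minimal resolution is crepant, so the self-intersection of the canonical class is unchanged and equals $\frac{1}{|G|}K_{A\times B}^2=\frac{8(g(A)-1)(g(B)-1)}{|G|}$. Combining this with the inequality $K^2\le 8\chi$ recalled in the introduction (\cite{4names}) yields the key estimate
\[
(g(A)-1)(g(B)-1)\le |G|\,\chi .
\]
I would also record two structural facts: as $Y$ is of general type both curves satisfy $g(A),g(B)\ge 2$ (otherwise $A\times B$, and hence $Y$, would have Kodaira dimension at most $1$), and $G$ acts faithfully on each factor.

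The second ingredient is Maclachlan's sharp bound: an abelian group acting faithfully on a smooth curve of genus $g\ge 2$ has order at most $4g+4$. Applied to the action on $B$ this gives $|G|\le 4(g(B)+1)$, so the key estimate sharpens to $(g(A)-1)(g(B)-1)\le 4\chi\,(g(B)+1)$.

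Both inequalities of the statement are now formal. Using the hypothesis $g(A)\ge g(B)$ I obtain $(g(B)-1)^2\le (g(A)-1)(g(B)-1)\le 4\chi\,(g(B)+1)$, that is $g(B)^2-(2+4\chi)g(B)+(1-4\chi)\le 0$; the largest root of this quadratic is $1+2\chi+2\sqrt{\chi^2+2\chi}$, which is the first bound. For the second, dividing the sharpened estimate by $g(B)-1>0$ gives $g(A)-1\le 4\chi\,\frac{g(B)+1}{g(B)-1}$, which is exactly the claimed inequality.

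The routine part is the final quadratic computation; the two points that need care are the two ingredients. The main obstacle is the group-order bound: one must verify that the abelian group $G$ occurring in the formula for $K^2$ really does act faithfully on the curve $B$ of genus $\ge 2$, so that Maclachlan's estimate can be invoked verbatim with the \emph{same} $G$, and one must justify the $K^2$ identity (crepancy of the resolution together with $K^2\le 8\chi$) for a singular quotient model whose singularities are of type $A_n$.
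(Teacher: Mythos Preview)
Your argument is correct and follows essentially the same route as the paper: obtain $(g(A)-1)(g(B)-1)\le |G|\chi$, bound $|G|\le 4g(B)+4$ via the abelian automorphism bound (this is \cite[Corollary~9.6]{Br00}, i.e.\ Maclachlan's bound), and then solve the resulting quadratic. The only cosmetic difference is in the first step: the paper reads off the key inequality directly from Polizzi's formula $\chi=\frac{(g(A)-1)(g(B)-1)}{|G|}+\frac{1}{12}\sum_{y}\frac{n_y^2-1}{n_y}$ (the correction term being non\-negative), whereas you reach the same conclusion via $K_Y^2=\frac{8(g(A)-1)(g(B)-1)}{|G|}$ together with $K^2\le 8\chi$; these two are of course equivalent, and the faithfulness of the $G$-action on each factor that you flag is part of the standard product-quotient setup used throughout.
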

\begin{proof}
According to \cite[Proposition 3.10]{Pol09},
\[
\chi=\frac{(g(A)-1)(g(B)-1)}{|G|} +\frac1{12} \sum_{y \in \Sing Y} \frac{n_y^2-1}{n_y} \geq \frac{(g(A)-1)(g(B)-1)}{|G|} \ .
\]
Since $G$ is abelian, we have $|G| \leq 4g(B)+4$ by \cite[Corollary 9.6]{Br00}, which implies
$$
\chi (4g(B)+4) \geq \chi|G|\geq (g(A)-1)(g(B)-1) \geq (g(B)-1)^2 \ .
$$ In particular
\[
g(B)^2-(4\chi+2)g(B)+1-4\chi \leq 0.
\]
\end{proof}

We assume $Y$ regular, then $E:=A/G\cong F:=B/G \cong \PP^1$. Since $G$ is abelian, then the finite map $
\psi \colon Y \rightarrow E \times F \cong \PP^1 \times \PP^1$
 is a Galois cover with Galois group $G$. The branching locus of $\psi$ is the union of the lines $E_i:=E \times q_i$, $F_j:=p_j \times F$, where $p_j$ are the branching points of $A \rightarrow E$ and $q_i$ are the branching points of $B \rightarrow F$. The cover $A \rightarrow E$ associates naturally to each point $p_j$ its {\it local monodromy}, an element $g_j$ of $G$, that is also the local monodromy of $F_j$ for $\psi$. The element $g_j$ is the image of a small loop around $p_j$ for the map in \cite[page 1002]{4names}, not depending on the choice of the loop since $G$ is abelian. In the notation of \cite{Pa},  $F_j$ is a component of the divisor $D_{H,\eta}$ with $H=\langle g_j \rangle$ and $\eta \in H^*$ defined by $\eta(g_j)=e^\frac{2\pi i}{m_j}$ where $m_j$ is the order of $g_j$ in $G$.
 
By the Riemann Existence Theorem, the local monodromies give a bijection among the Galois covers of $\PP^1$ and the maps $\{p_j\} \rightarrow G$ such that $p_j$ is a finite subset of $\PP^1$ and the image is a set of generators of $G$ that is {\it spherical}, {\it i.e.} such that the sum of the images of the $p_j$ is zero.
So we produce regular product-quotient surfaces by producing two sets of spherical generators of $G$ and then choosing freely the points $p_j, q_i$.

The {\it type} of the set of generators is the {\it multiset} (a set whose elements are allowed to have a multiplicity in $\NN$) of the orders $m_j$ of the local monodromies of the $p_j$.
See \cite{4names} for details. 

Fix now $p_g(Y) \in \NN$. The algorithm is the following:
\begin{itemize}
\item[] $\mathbf 1^{st}$ {\bf Step}: Since $\chi=p_g(Y)+1$, Lemma \ref{newbound} determines finitely many possible pairs of genera $(g(A),g(B))$ and so finitely many possible orders of the group $|G|\leq 4g(B)+4$. The inequalities in \cite[Theorem 9.1, Corollary 9.6]{Br00} and \cite[Proposition 3.6]{Pol09} limit the types $T_1$ and $T_2$ of the coverings $A \rightarrow E$ and $B \rightarrow F$ to finitely many possibilities. Our program lists first all possible $5-$tuples $[g(A),g(B),|G|,T_1,T_2]$.
\item[] $\mathbf 2^{nd}$ {\bf Step}: For each resulting $5-$tuple $[g(A),g(B),|G|,T_1,T_2],$ the program searches among all groups of order $|G|$ for pairs of systems of spherical generators of respective types $T_1$ and $T_2$. For each such pairs it computes the singularities of the resulting surface $(A \times B)/G$ using \cite[Proposition 1.17]{BP12} (or the equivalent \cite[Proposition 3.3]{Pa}) and discards all pairs giving singularities not canonical.
\item[] $\mathbf 3^{rd}$ {\bf Step}: Finally the program discards, among the surfaces produced by Step 2, those whose canonical system has a 1-dimensional base component, as follows. Since the singularities of $Y$ are Gorenstein, we can use Pardini's formula  (\cite[Proposition 4.1, c)]{Pa} and \cite[Proposition 2.1)]{infrig} ) for splitting its canonical system  as in (\ref{cangens}).
More precisely we obtain subsystems of the form $\psi^*|M_\chi| +\Phi_\chi,$ $\chi \in G^*$, generating the canonical system, where $|M_\chi|$ is a (possibly empty) complete linear system on $\PP^1 \times \PP^1$ and $\Phi_\chi$ is an effective divisor supported on the union of the $E_i$ and the $F_j$. Since every complete linear system on $\PP^1 \times \PP^1$ is base point free, then the canonical system of the product-quotient surface has at most isolated base points if and only if the  divisors $\Phi_\chi$ such that $|M_\chi| \neq \emptyset$ meet only at a finite number of points.
\end{itemize}

The program returns: the group $G$, the types $T_i$, a pair of generating vectors, the systems $M_\chi$ that are not empty, the singularities of $Y$,
and the number of base points of the canonical system.

\begin{remark}
By Beauville's argument,
\[
\deg \phi \leq \frac{8(p_g+1)- b- \frac23 \sum_{y \in \Sing Y} \frac{n_y^2-1}{n_y}}{p_g-2},
\]
where $b$ is the number of base points. The equality holds if and only if $\phi(S) \subset \PP^{p_g-1}$ is a surface of minimal degree $p_g-2$.
\end{remark}

Running the program for $p_g=3,$ we obtain the following result.

\begin{proposition}
There are exactly $2$ families of regular product-quotient surfaces\\ $(A \times B)/G$ with $G$ abelian acting freely, $p_g=3$ and canonical system base point free,  the families described in Section \ref{deg32}.

There are further $17$ families of regular product-quotient surfaces $(A \times B)/G$ with $G$ abelian, $p_g=3$, canonical system base point free whose quotient model has only canonical singularities.
\end{proposition}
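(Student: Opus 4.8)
The plan is to specialize the algorithm of the preceding section to $p_g=3$ and verify that its output is exactly the list of families claimed. Since the surfaces in question are regular we have $q=0$ and hence $\chi=p_g+1=4$. Carrying out the $\mathbf 1^{st}$ Step, I would substitute $\chi=4$ into Lemma~\ref{newbound}: the first inequality gives $g(B)\leq 18$, the bound $|G|\leq 4g(B)+4\leq 76$ then limits the group order, and the second inequality bounds $g(A)$. Combined with the constraints on the types $T_1,T_2$ coming from \cite{Br00} and \cite{Pol09}, this reduces the search to a finite (if large) list of $5$-tuples $[g(A),g(B),|G|,T_1,T_2]$.

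For the $\mathbf 2^{nd}$ Step I would, for each such tuple, search all abelian groups of the prescribed order for pairs of spherical systems of generators of types $T_1$ and $T_2$, compute the singularities of the associated $(A\times B)/G$ by \cite[Proposition 1.17]{BP12}, and discard every pair whose surface carries a singularity worse than a rational double point. The surviving surfaces have only canonical, hence Gorenstein, singularities, so Pardini's splitting~(\ref{cangens}) is available; the $\mathbf 3^{rd}$ Step then keeps only those for which the effective divisors $\Phi_\chi$ attached to the non-empty subsystems $|M_\chi|$ meet in finitely many points, that is, those with at most isolated base points.

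To conclude I would impose the two remaining conditions of the statement on this output. Base-point freeness is read off from the returned number $b$ of base points by retaining only the surfaces with $b=0$. The requirement that $G$ act freely is equivalent, by the discussion in the Introduction and by \cite{4names}, to smoothness of the quotient model, equivalently to $K^2=8\chi=32$; this selects precisely the tuples for which the $\mathbf 2^{nd}$ Step returned no singularities. Running the program then shows that exactly two base-point-free families have smooth quotient model—necessarily the $(\mathbb Z/2)^4$-coverings of Section~\ref{deg32}, with canonical map of degree $32$—while exactly $17$ further base-point-free families have quotient model with strictly canonical (i.e. non-trivial) singularities.

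The main obstacle is completeness rather than any single computation: one must be certain that the theoretical bounds genuinely truncate the search to a finite set the machine can exhaust, and that the enumeration of spherical generating systems is carried out up to the correct equivalence (the action of $\Aut(G)$ together with the braid/Hurwitz action), so that the surfaces are grouped into the genuine families being counted and none is omitted or counted twice. An independent sanity check is to verify by hand that the two free-action families reproduce those of Section~\ref{deg32}, matching their building data, invariants $\chi=4$, $K^2=32$, and the three generating canonical subsystems; this confirms the correctness of the code on the extremal cases.
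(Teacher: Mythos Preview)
Your proposal is correct and follows essentially the same approach as the paper: the paper's entire proof is the single sentence ``Running the program for $p_g=3$, we obtain the following result,'' and your write-up is a faithful expansion of what that computation entails, with the numerical bounds from Lemma~\ref{newbound} worked out correctly for $\chi=4$. Your remarks on grouping the output into families via the $\Aut(G)$/Hurwitz equivalence and on the sanity check against Section~\ref{deg32} are sensible additions, but they do not depart from the paper's method.
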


We notice that the two families are distinct even as families in the Gieseker moduli space of surfaces of general type. Indeed by \cite[Theorem 1.3]{FanoConference} a  surface in one family is not even deformation equivalent to any surface in the other family.

The degrees of the canonical maps of the surfaces in the $17$ further families form the set $\{2,4,6,8,16\}$.

\section{The case $p_g=4$}


Running the program for $p_g=4$ we get:

\begin{proposition}\label{proppg=4}
There are no regular product-quotient surfaces $(A \times B)/G$ with $G$ abelian acting freely, $p_g=4$ and canonical system base point free.

There are $60$ families of regular product-quotient surfaces $(A \times B)/G$ with $G$ abelian, $p_g=4$, canonical system base point free whose quotient model has only canonical singularities. 
\end{proposition}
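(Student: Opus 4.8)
The plan is to run the algorithm of the previous section with the single input $p_g=4$, so that $\chi=p_g+1=5$, and to record its output; the finiteness established there guarantees that the resulting classification is complete. I would organize the verification along the three steps of the algorithm and then treat the freeness assertion separately.

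First I would apply Lemma \ref{newbound} with $\chi=5$. This yields an explicit numerical bound on $g(B)$, hence on $g(A)$, and therefore on $|G|\leq 4g(B)+4$. The bounds of \cite[Theorem 9.1, Corollary 9.6]{Br00} together with \cite[Proposition 3.6]{Pol09} then restrict the admissible types $T_1,T_2$ of the two coverings $A\to E$ and $B\to F$ to finitely many possibilities. The output of this first step is the finite list of $5$-tuples $[g(A),g(B),|G|,T_1,T_2]$.

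Next, for each such tuple I would carry out the second step: search, over all groups of order $|G|$, for pairs of systems of spherical generators of types $T_1$ and $T_2$, compute the singularities of the associated $(A\times B)/G$ by \cite[Proposition 1.17]{BP12} (equivalently \cite[Proposition 3.3]{Pa}), and keep only those pairs whose singularities are all of type $A_n$. On the survivors I would run the third step: using the splitting (\ref{cangens}) of the canonical system into subsystems $\psi^*|M_\chi|+\Phi_\chi$, I would first discard every surface in which two of the divisors $\Phi_\chi$ with $|M_\chi|\neq\emptyset$ share a component, and then, since each $\psi^*|M_\chi|$ is base point free, compute the base locus of $|K_Y|$ as the common intersection of the remaining $\Phi_\chi$. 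Retaining exactly those surfaces for which this intersection is empty gives the families with base point free canonical system; counting them produces the number $60$ of the second assertion, each family being recorded by the data the program outputs (the group $G$, the types, a pair of generating vectors, the nonempty $M_\chi$, and the singularities).

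For the first assertion I would isolate, among the tuples surviving Step 2, those for which the action of $G$ on $A\times B$ is \emph{free}. Since $G$ is abelian this is equivalent to the quotient model being smooth, and it can be read directly from the generating vectors: the union of the cyclic subgroups generated by the local monodromies of $T_1$ must meet the analogous union coming from $T_2$ only in the identity. The assertion is then that no such free tuple simultaneously gives $p_g=4$ and an empty canonical base locus, which the program confirms by exhibiting no free candidate among the $60$ base point free families. The principal obstacle I anticipate is not any individual computation but certifying the exhaustiveness of the finite search: one must check that the genus and order bounds of Lemma \ref{newbound} and of \cite{Br00,Pol09} are applied tightly enough that no admissible tuple is missed, and that the singularity test and the base-locus test faithfully implement the criteria of the preceding sections. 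Once this bookkeeping is secured, the proposition is a finite, mechanical verification.
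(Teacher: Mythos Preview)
Your approach is exactly the paper's: the proposition is obtained by running the algorithm of the preceding section with input $p_g=4$, and the paper offers no argument beyond that. Your expansion of the three steps and the freeness test is accurate and matches the description of the algorithm.

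There is, however, one genuine slip in your Step~3. You propose to ``discard every surface in which two of the divisors $\Phi_\chi$ with $|M_\chi|\neq\emptyset$ share a component''. This is too strong: the base locus of $|K_Y|$ is the intersection $\bigcap_{\chi:\,|M_\chi|\neq\emptyset}\Phi_\chi$, so a component common to \emph{two} of the $\Phi_\chi$ is harmless provided some third $\Phi_{\chi'}$ avoids it. Indeed, the very family singled out after the proposition (the $(\ZZ/3)^2$ example with $K^2=24$) has $\widehat{E}_1$ and $\widehat{F}_1$ common to $\Phi_{(0,1,0)}$ and $\Phi_{(1,0,1)}$, yet its canonical system is base point free because $\Phi_{(0,2,0)}$ and $\Phi_{(2,2,2)}$ miss those curves. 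With your criterion this surface would be wrongly discarded and the count of $60$ would not be reproduced. The correct test is the one the paper states: check whether \emph{all} the $\Phi_\chi$ with $|M_\chi|\neq\emptyset$ meet only in finitely many points.
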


The highest degree realized by the 60 families in Proposition \ref{proppg=4} is $12$, realized by a family with group $G=(\ZZ/3)^2$. The branching divisor is the union of $8$ lines, $4$ for each ruling: $E_1+E_2+E_3+E_4+F_1+F_2+F_3+F_4$, with local monodromies
\begin{align*}
E_1 \mapsto& (1,1,2)&
E_2 \mapsto& (2,2,0)&
E_3 \mapsto& (1,2,1)&
E_4 \mapsto& (2,1,0)\\
F_1 \mapsto& (2,1,1)&
F_2 \mapsto& (0,1,2)&
F_3 \mapsto& (1,2,1)&
F_4 \mapsto& (0,2,2)\\
\end{align*}
The surface has $9$ singular points of type $A_2$ and $K^2=24$. There are $4$ characters $\chi$ with $|M_\chi| \neq \emptyset,$ here are the respective $\Phi_\chi$:
\begin{align*}
\Phi_{(0,1,0)} =& \widehat{E}_1+\widehat{E}_4+\widehat{F}_1+\widehat{F}_2&
\Phi_{(1,0,1)} =& 2\widehat{E}_1+2\widehat{F}_1\\
\Phi_{(0,2,0)} =& \widehat{E}_2+\widehat{E}_3+\widehat{F}_3+\widehat{F}_4&
\Phi_{(2,2,2)} =& 2\widehat{E}_4+2\widehat{F}_2\\
\end{align*}
with $\widehat{E}_i=\frac13 \psi^*(E_i)$,  $\widehat{F}_i=\frac13 \psi^*(F_i)$.

Recall that since the canonical system is base point free and has positive self-intersection, the canonical map is not composed with a pencil.
Since $2\Phi_{(0,1,0)}=\Phi_{(1,0,1)} +\Phi_{(0,2,0)},$ the image of the canonical map is contained in a quadric cone and therefore is the quadric cone. More precisely, one can choose sections $x_0,x_1,x_2,x_3$ of $H^0(S,K_S)$ with respective divisors $ \Phi_{(0,1,0)}$, $\Phi_{(1,0,1)} $, $\Phi_{(0,2,0)}$, $\Phi_{(2,2,2)} $ such that the canonical image is the quadric $x_0^2=x_1x_2$.

There are three more families of surfaces in our list of  $60$ with $K^2_S\geq 24$: one with $K^2 =36$ and two with $K^2=32.$
We can show that their canonical image is not contained in a quadric, and therefore the maximal canonical degree we can reach for $p_g=4$ is $12$.

\bibliography{References}

\bigskip

\bigskip

\noindent Christian Gleissner \vspace{2mm}\\
University of Bayreuth, Lehrstuhl Mathematik VIII;\\
Universit\"atsstra\ss e 30, D-95447 Bayreuth, Germany\\
\verb|Christian.Gleissner@uni-bayreuth.de|\\

\noindent Roberto Pignatelli \vspace{2mm}\\ Dipartimento di Matematica, Universit\`{a} di
Trento \\ Via Sommarive, 14 I-38123 Trento (TN), Italy.  \\
\verb|Roberto.Pignatelli@unitn.it|\\

\noindent Carlos Rito \vspace{2mm}
\\{\it Permanent address:}
\\ Universidade de Tr\'as-os-Montes e Alto Douro, UTAD
\\ Quinta de Prados
\\ 5000-801 Vila Real, Portugal
\\ www.utad.pt, \verb|crito@utad.pt|
\\{\it Temporary address:}
\\ Departamento de Matem\' atica
\\ Faculdade de Ci\^encias da Universidade do Porto
\\ Rua do Campo Alegre 687
\\ 4169-007 Porto, Portugal
\\ www.fc.up.pt, \verb|crito@fc.up.pt|

\end{document}